\documentclass[12pt,amsmath]{amsart}
\usepackage{graphicx} 
\usepackage{amsthm}

\usepackage{amssymb, enumitem,mathtools}
\usepackage{amscd}
\usepackage{amsmath}
\usepackage{ mathrsfs }
\usepackage{indentfirst}
\usepackage{amssymb}
\usepackage{enumitem}
\usepackage{color}
\usepackage{float}
\usepackage[breaklinks=true,colorlinks=true,linkcolor=blue,citecolor=red,urlcolor=green]{hyperref}

\newtheorem{theorem}{Theorem}[section]

\newtheorem{remark}[theorem]{Remark}

\newtheorem{proposition}[theorem]{Proposition}

\date{}

\begin{document}
\title[]{
Rigidity of compact four-dimensional weakly Einstein Ricci Solitons}

\author{JeongHyeong Park}
\address{Department of Mathematics, Sungkyunkwan University, Suwon 16419, Korea}
\email{parkj@skku.edu}
\author{Wooseok Shin}
\address{Department of Mathematics, Sungkyunkwan University, Suwon 16419, Korea}
\email{tlsdntjr@skku.edu}\
\subjclass[2020]{Primary 53C25; Secondary 53C24.}
\keywords{Weakly Einstein metric, Ricci soliton, curvature identity,
rigidity.}
\maketitle
\begin{abstract} 
We prove that every compact four-dimensional weakly Einstein Ricci soliton is Einstein. The nontrivial compact case reduces to the
gradient shrinking setting, where a differential identity for weakly Einstein four-manifolds, together with the curvature identity for
gradient Ricci solitons, yields the pointwise relation $|R|^2\nabla f=0$ for the soliton potential $f$. Consequently, no compact proper 
weakly Einstein four-manifold admits a Ricci soliton structure. A noncompact homogeneous example shows that the compactness assumption is essential.
\end{abstract}
\vspace{0.4cm}

\section{Introduction}
Let $(M^n,g)$ be an $n$-dimensional Riemannian manifold, and let $R$, $\rho$, and $\tau$ denote its Riemann curvature tensor, Ricci tensor, and scalar curvature, respectively. Throughout the paper, repeated indices are summed over. We define the symmetric $(0,2)$-tensor $\check R$ by $$\check R_{ij}=R_{ikpq}R_j{}^{kpq}. $$ The manifold $(M^n,g)$ is called \emph{weakly Einstein} if $ \check R=\frac{|R|^2}{n}g. $

The weakly Einstein condition was introduced by Euh, Park, and
Sekigawa \cite{EPS2}, motivated by the four-dimensional curvature
identity established in \cite{EPS1}. They also characterized the
condition in terms of generalized Singer--Thorpe bases. In dimension four, the weakly Einstein condition has a useful equivalent tensorial formulation. Let $ e=\rho-\frac{\tau}{4}g $ be the trace-free Ricci tensor. Then $(M^4,g)$ is weakly Einstein if and only if $ 6W(e)=-\tau e, $ where $W$ denotes the Weyl tensor and $[W(e)]_{ij}=W_{iabj}e^{ab} $ with our curvature convention; see \cite{DEKP}. In particular, every Einstein four-manifold and every conformally flat scalar-flat four-manifold is weakly Einstein. We call a weakly Einstein four-manifold \emph{proper} if it belongs to neither of these two classes. More recently, Derdzinski, Park, and Shin \cite{DPS} obtained a complete classification of non-Einstein weakly Einstein algebraic curvature tensors on an oriented Euclidean four-space. The resulting algebraic-equivalence types form three disjoint five-dimensional families. The present paper concerns a different aspect of the weakly Einstein condition, namely its interaction with the Ricci soliton equation. A Ricci soliton is a Riemannian manifold $(M,g)$ satisfying $ \rho+\frac12\mathcal L_Xg=\lambda g $ for some vector field $X$ and some constant $\lambda$. It is called shrinking, steady, or expanding according as $\lambda>0$, $\lambda=0$, or $\lambda<0$, respectively. When $X=\nabla f$ for some smooth function $f$, the Ricci soliton equation becomes $ \rho+\operatorname{Hess}f=\lambda g, $ and $(M,g,f)$ is called a gradient Ricci soliton. 

The purpose of this paper is to investigate compact four-dimensional weakly Einstein manifolds admitting Ricci soliton structures. Our main result is the following rigidity theorem. 
\begin{theorem}\label{thm:main} Every compact four-dimensional weakly Einstein Ricci soliton is Einstein. Consequently, no compact four-dimensional proper weakly Einstein manifold admits a Ricci soliton structure. \end{theorem}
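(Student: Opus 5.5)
The plan is to first reduce to the gradient shrinking case, and then extract a pointwise identity that forces the potential to be constant.

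\textbf{Reduction.} By Perelman's theorem, every compact Ricci soliton is a gradient Ricci soliton: after adding a Killing field to $X$ we may assume $X=\nabla f$. Compact steady and expanding gradient solitons are Einstein. To see this, trace the equation to get $\tau+\Delta f=4\lambda$, and combine it with the identity $\tau+|\nabla f|^2-2\lambda f=\mathrm{const}$. Then the maximum principle applied to $\Delta f-|\nabla f|^2+2\lambda f=\mathrm{const}$ forces $f$ to be constant when $\lambda\le 0$. So we may assume $(M^4,g,f)$ is a compact gradient shrinking soliton that is weakly Einstein, and it suffices to show $\nabla f\equiv 0$, since then $\rho=\lambda g$.

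\textbf{Two divergence identities.} The soliton supplies the standard curvature identity obtained by differentiating $\rho+\operatorname{Hess}f=\lambda g$ and commuting derivatives:
\[
\nabla_i\rho_{jk}-\nabla_j\rho_{ik}=-R_{ijkl}\nabla^l f,\qquad \nabla_i\tau=2\rho_{il}\nabla^l f .
\]
Next I would use the weakly Einstein hypothesis $\check R=\frac{|R|^2}{4}g$. Taking its divergence gives
\[
\nabla^i\check R_{ij}=\tfrac14\nabla_j|R|^2 .
\]
The left side is computed as follows: write $\nabla^i(R_{ikpq}R_j{}^{kpq})$, use the second Bianchi identity to turn $\nabla^iR_{ikpq}$ into $\nabla_p\rho_{kq}-\nabla_q\rho_{kp}$, and handle the term $R_{ikpq}\nabla^iR_j{}^{kpq}$ by Bianchi as well. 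This should yield a four-dimensional differential identity relating $\nabla|R|^2$, $R\ast\nabla R$ and $R\ast\delta R$.

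Substituting the soliton relation $\delta R=\pm R(\cdot,\cdot,\cdot,\nabla f)$ turns every $\delta R$ term into $R\ast R\ast\nabla f$. These contractions should reassemble via $\check R$ itself, so they become $\frac{|R|^2}{4}\nabla f$ up to sign and a numerical factor. The $R\ast\nabla R$ term should combine with $\nabla|R|^2$, using $R_{ikpq}\nabla_jR^{ikpq}=\frac12\nabla_j|R|^2$ and the cyclic identity, so that it cancels against the right-hand side $\frac14\nabla_j|R|^2$. The target is the pointwise relation
\[
c\,|R|^2\nabla f=0
\]
with a nonzero constant $c$.

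\textbf{Conclusion and main obstacle.} Given $|R|^2\nabla f=0$, let $U=\{\nabla f\neq0\}$. If $U$ were nonempty, then $R=0$ on the open set $U$, so $\rho=0$ there and $\operatorname{Hess}f=\lambda g$ on $U$. Compact shrinkers are connected with $\tau>0$ everywhere, which contradicts $R=0$ on $U$. Hence $\nabla f\equiv0$ and $g$ is Einstein. The second claim follows immediately, because a proper weakly Einstein manifold is by definition non-Einstein.

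The main obstacle is the exact bookkeeping in the divergence computation: showing that the $\nabla R$ terms cancel completely (this is where $n=4$ and the precise constant $\frac1n$ must be used) and that the surviving coefficient $c$ is nonzero. If the cancellation is not exact, I would try instead to compute $\operatorname{div}$ of the tensor $\check R-\frac{|R|^2}{4}g$ contracted with $\nabla f$, and integrate over $M$ using the weighted measure $e^{-f}dV$.
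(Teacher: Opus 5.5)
Your proposal is correct and follows the paper's route. You reduce via Perelman to a gradient shrinker, take the divergence of $\check R$, and use the weakly Einstein condition twice. The first use cancels the $R\ast\nabla R$ term, which equals exactly $\frac14\nabla_j|R|^2$ in every dimension, so the cancellation is exact precisely because $n=4$. The second use rewrites the soliton-substituted term as $\check R_{jk}\nabla^k f=\frac14|R|^2\nabla_j f$, which gives $|R|^2\nabla f=0$ as you anticipated. The only difference is the final step: you invoke positivity of scalar curvature on compact shrinkers, whereas the paper runs an open--closed argument on $\{|R|>0\}$ that needs only $\lambda\neq0$ and connectedness, and so also works without compactness.
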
 

The compactness assumption in Theorem~\ref{thm:main} is essential. In their classification of four-dimensional homogeneous weakly
Einstein manifolds, Arias-Marco and Kowalski \cite{AK} singled out a
nonlocally symmetric example, known as the \emph{EPS space}, which is
unique up to local homothety \cite{DPS2}.
It may be realized by a left-invariant metric
on a solvable Lie group of the form
$
\mathbb{R}\ltimes\mathbb{R}^{3}.
$
Garc\'ia-R\'io, Mari\~no-Villar, V\'azquez-Abal, and
V\'azquez-Lorenzo \cite[p.~5]{GR} observed that
$
Q+3\operatorname{Id}
$
is a derivation, where $Q$ denotes the Ricci operator. Thus, with the
normalization adopted there,
the EPS metric is an expanding
algebraic Ricci soliton with soliton constant $\lambda=-3$.
Although the numerical value of the soliton constant changes under
homothetic rescaling, the normalization-independent conclusion is
that the EPS space admits a non-Einstein expanding algebraic Ricci
soliton structure. Consequently, non-Einstein four-dimensional weakly
Einstein Ricci solitons do exist in the noncompact setting.

We are not aware of any previous rigidity result for compact four-dimensional weakly Einstein Ricci solitons. Every compact steady or expanding Ricci soliton is Einstein, while Perelman showed that every compact shrinking Ricci soliton metric admits a gradient shrinking Ricci soliton structure; see \cite[\S2.4 and Remark~3.2]{Perelman}. Consequently, the only nontrivial compact case reduces to the gradient shrinking setting. The key observation in the proof is the pointwise identity $ \check R(\nabla f,\cdot)=0, $ obtained by combining a differential consequence of the weakly Einstein condition with the curvature identity of a gradient Ricci soliton. Although Theorem~\ref{thm:main} concerns arbitrary compact Ricci solitons, the pointwise argument itself does not require compactness in the gradient case. In fact, it also shows that every connected four-dimensional weakly Einstein gradient Ricci soliton is Einstein, without any compactness or completeness assumption.

\section{Curvature conventions and basic identities}

Throughout the paper, all manifolds are assumed to be smooth,
connected, and without boundary. Our curvature tensor is defined by
$$
R(X,Y)Z
=
\nabla_X\nabla_YZ-\nabla_Y\nabla_XZ-\nabla_{[X,Y]}Z,
$$
and we write
$$
R(X,Y,Z,W)=g(R(X,Y)Z,W).
$$
Thus, with respect to a local frame,
$$
\rho_{ab}=g^{ij}R_{iabj},
\qquad
\tau=g^{ab}\rho_{ab}.
$$
We define
$$
\check R_{ij}=R_{ikpq}R_j{}^{kpq}.
$$

With our convention, the second Bianchi identity and its contracted
form are
$$
\nabla_iR_{jabc}
+
\nabla_jR_{aibc}
+
\nabla_aR_{ijbc}
=
0
$$
and
$$
\nabla^kR_{iabk}
=
\nabla_i\rho_{ab}-\nabla_a\rho_{ib},
$$
respectively.

If $(M,g,f)$ is a gradient Ricci soliton satisfying
$$
\rho+\operatorname{Hess}f=\lambda g,
$$
then, using the contracted second Bianchi identity above, differentiating the soliton equation and commuting covariant
derivatives give
\begin{equation}\label{eq:soliton-curvature}
\begin{aligned}
\nabla^kR_{iabk}
&=\nabla_i\rho_{ab}-\nabla_a\rho_{ib}\\
&=
-\nabla_i\nabla_a\nabla_bf
+\nabla_a\nabla_i\nabla_bf=
R_{iabk}\nabla^kf.
\end{aligned}
\end{equation}

\section{A differential identity and the proof of Theorem \ref{thm:main}}

{We first establish the following differential identity.
\begin{proposition}\label{31}

Let $(M^4,g)$ be a weakly Einstein manifold. Then
$$\
R_{iabj}\nabla^i\rho^{ab}=0,
\qquad
R_{iabj}\nabla^kR^{iab}{}_k=0.
$$\
\end{proposition}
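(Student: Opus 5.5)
The plan is to take the divergence of $\check R$ in two ways. The weakly Einstein condition gives one expression. A general identity, coming from the second Bianchi identity, gives another. Comparing them yields the second relation in Proposition~\ref{31}, and the contracted Bianchi identity then converts it into the first. Dimension four enters only because the weakly Einstein condition reads $\check R=\frac{|R|^2}{4}g$.

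\textbf{Step 1 (general divergence formula).} On any Riemannian manifold, I expand
$$
\nabla^i\check R_{ij}=(\nabla^iR_{ikpq})R_j{}^{kpq}+R^{ikpq}\nabla_iR_{jkpq}.
$$
Write $X_j=R^{ikpq}\nabla_iR_{jkpq}$. By pair symmetry, $X_j=R^{pqik}\nabla_iR_{pqjk}$. I then apply the second Bianchi identity
$$
\nabla_iR_{pqjk}+\nabla_jR_{pqki}+\nabla_kR_{pqij}=0
$$
and contract it with $R^{pqik}$.
\begin{itemize}
\item The first term contributes $X_j$.
\item In the third term, relabelling $i\leftrightarrow k$ and using the two antisymmetries shows that it also equals $X_j$.
\item The middle term is $R^{pqik}\nabla_jR_{pqki}=-\tfrac12\nabla_j|R|^2$.
\end{itemize}
This gives $2X_j=\tfrac12\nabla_j|R|^2$, hence
$$
\nabla^i\check R_{ij}=(\nabla^iR_{ikpq})R_j{}^{kpq}+\tfrac14\nabla_j|R|^2 .
$$
Getting the signs right in this relabelling is the step I regard as the main obstacle. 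It is where the coefficient $\tfrac14$ is produced, and that coefficient must match the $\frac1n$ in the weakly Einstein condition exactly when $n=4$.

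\textbf{Step 2 (use the hypothesis).} If $\check R=\frac{|R|^2}{4}g$, then $\nabla^i\check R_{ij}=\frac14\nabla_j|R|^2$. Comparing with Step~1 gives $(\nabla^iR_{ikpq})R_j{}^{kpq}=0$. Using the pair symmetry and antisymmetries of $R$, I rewrite this contraction so that the divergence falls on the last slot, matching the paper's index placement. This yields $R_{iabj}\nabla^kR^{iab}{}_k=0$ (up to an overall sign, which is irrelevant).

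\textbf{Step 3 (first identity).} By the contracted second Bianchi identity from Section~2,
$$
R_{iabj}\nabla^kR^{iab}{}_k=R_{iabj}\bigl(\nabla^i\rho^{ab}-\nabla^a\rho^{ib}\bigr).
$$
In the second term, renaming $i\leftrightarrow a$ and using $R_{aibj}=-R_{iabj}$ gives
$$
-R_{iabj}\nabla^a\rho^{ib}=R_{iabj}\nabla^i\rho^{ab}.
$$
So the whole expression equals $2R_{iabj}\nabla^i\rho^{ab}$. Since the left side vanishes by Step~2, $R_{iabj}\nabla^i\rho^{ab}=0$, which completes the argument.
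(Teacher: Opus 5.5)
Your proposal is correct and follows essentially the same route as the paper. You compute $\nabla^i\check R_{ij}$, extract $\frac14\nabla_j|R|^2$ via the second Bianchi identity, cancel it against the divergence of $\frac{|R|^2}{4}g$, and pass between the two identities using $\nabla^kR_{iabk}=\nabla_i\rho_{ab}-\nabla_a\rho_{ib}$. The differences are cosmetic: you derive the curvature-divergence identity before the Ricci one, the reverse of the paper's order, and you check explicitly the coefficient $\frac14$ that the paper only asserts; the sign in your Step~2 in fact comes out as $+$.
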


\begin{proof}
Differentiating
$$
\check R_{ij}= R_{iabc} R_j{}^{abc}
$$
 and using the second Bianchi identity and the curvature symmetries, we obtain
\begin{align*}
\nabla^i\check R_{ij}
&=
(\nabla^iR_{iabc})R_j{}^{abc}
+
R_{iabc}\nabla^iR_j{}^{abc}\\
&=R_{jabc}\nabla^c\rho^{ab}
-
R_{jabc}\nabla^b\rho^{ac}
+
\frac14\nabla_j|R|^2
\\&
=\frac14\nabla_j|R|^2
+
2R_{iabj}\nabla^i\rho^{ab}.
\end{align*}\

Since $(M^4,g)$ is weakly Einstein,
$$\
\nabla^i\check R_{ij}
=
\frac14\nabla_j|R|^2.
$$\
Hence
$$\
R_{iabj}\nabla^i\rho^{ab}=0.
$$\
Moreover, the contracted second Bianchi identity gives
$$\
\nabla^kR_{iabk}
=
\nabla_i\rho_{ab}-\nabla_a\rho_{ib},
$$
and consequently
$$\
R_{iabj}\nabla^kR^{iab}{}_k
=
2R_{iabj}\nabla^i\rho^{ab}
=
0.
$$
\end{proof}
}

\begin{proof}[Proof of Theorem~\ref{thm:main}]
Every compact steady or expanding Ricci soliton is Einstein; see
\cite[\S2.4]{Perelman}. Hence it remains to consider the shrinking
case. By \cite[Remark~3.2]{Perelman}, a compact shrinking Ricci
soliton metric admits a gradient shrinking Ricci soliton structure.
Thus, there exist $f\in C^\infty(M)$ and a constant $\lambda>0$ such
that
$$
\rho+\operatorname{Hess}f=\lambda g.
$$
By \eqref{eq:soliton-curvature}, Proposition~\ref{31}, and the
curvature symmetries, we obtain

$$
\begin{aligned}
0
&=
R_{iabj}\nabla^kR^{iab}{}_k
=
R_{iabj}R^{iab}{}_{\ell}\nabla^\ell f =
\check R_{j\ell}\nabla^\ell f\\
&=
\frac14|R|^2\nabla_jf.
\end{aligned}
$$
Hence
$$
|R|^2\nabla f=0
$$
everywhere on $M$.
Set
$
U=\{x\in M:|R|(x)>0\}.
$
On $U$, we have $\nabla f=0$, and hence
$\operatorname{Hess}f=0$. The soliton equation therefore yields
$
\rho=\lambda g
$
on $U$. On $M\setminus U$, we have $R=0$, and consequently
$\rho=0$.

We claim that $U$ is closed. Let $x_\nu\in U$ and suppose that
$x_\nu\to x$. Since $\rho=\lambda g$ on $U$, continuity gives
$$
\rho_x=\lambda g_x.
$$
If $x\notin U$, then $R_x=0$, and hence $\rho_x=0$. Thus $\lambda g_x=0$, contradicting
$\lambda>0$. Thus $U$ is both open and closed. Since $M$ is
connected, either $U=M$ or $U=\varnothing$. In the first case
$\rho=\lambda g$, while in the second case $R=0$. In either case,
$g$ is Einstein.
\end{proof}
\begin{remark}
\emph{Compactness is used above only to reduce an arbitrary Ricci soliton
to the gradient shrinking case. For any four-dimensional weakly
Einstein gradient Ricci soliton, the same pointwise computation gives
$$
|R|^2\nabla f=0.
$$
If $\lambda=0$, then $\rho=0$ both on $\{|R|>0\}$ and on its
complement. If $\lambda\ne0$, the same open-and-closed argument
applies. Hence every connected four-dimensional weakly Einstein
gradient Ricci soliton is Einstein, without any compactness or
completeness assumption.}
\end{remark}
\vspace{0.4cm}

\section*{Acknowledgements}
This work was supported by the National Research Foundation of Korea (NRF)
grant funded by the Korea government (MSIT) (RS-2024-00334956).

\end{document}